\newcommand{\normal}{\color{black}}
\theoremstyle{plain}
\newtheorem{theorem}{Theorem}
\newtheorem{corollary}{Corollary}
\theoremstyle{definition}
\newtheorem{remark}{Remark}
\newtheorem{example}{Example}
\numberwithin{equation}{section}
\newcommand{\nat}{\mathds{N}}
\newcommand{\real}{\mathds{R}}
\newcommand{\I}{\mathds{1}}
\newcommand{\RR}{\mathrm{Re}\,}
\newcommand{\II}{\mathrm{Im}\,}
\title{On the speed of convergence in the local limit theorem for triangular arrays of random variables}
\author{%
    \textsc{V. Knopova}%
    \thanks{V.M.\ Glushkov Institute of Cybernetics,
            NAS of Ukraine,
            40, Acad.\ Glushkov Ave.,
            03187, Kiev, Ukraine,
            \texttt{vic\underline{ }knopova@gmx.de}}
    }
\date{}
\begin{document}

\maketitle
\begin{abstract}
    \noindent

    We establish the upper bound on the speed of convergence to the infinitely divisible limit density in the local limit theorem for triangular arrays of random variables $\{X_{k,n},\, k=1,..,a_n, \, n\in \nat\}$.

    \medskip\noindent
    \emph{Keywords:} local limit theorem, infinitely divisible law, speed of convergence.

    \medskip\noindent
    \emph{MSC 2010:} Primary: 60F15.
\end{abstract}

\maketitle

\section{Introduction}

This paper is motivated by \cite{KKu07}, where the local limit theorem for a triangular array of independent and identically distributed (i.i.d) in each series random variables $\{X_{k,n},\, k=1,..,a_n\}$ is established. Staying in frames of the situation studied in \cite{KKu07}, in this note  we would like make a step further and  obtain the information about the speed of convergence to the limit density.

In contrast to the local limit theorem for the normal law, there is not much known even about the local limit theorem for infinitely divisible limit densities. Of course, one can refer to Gnedenko's theorem on the necessary and sufficient conditions for the convergence to the stable law, see \cite{IL71}. Under certain conditions the uniform convergence to the limit density was proved in \cite{KKu07},   but up to the author's knowledge  in the general case nothing is known  about the speed of convergence.

To make the presentation self-contained, we quote  below the necessary 
and sufficient conditions for convergence to the infinitely divisible law, see  Theorem~2, Chapter XVII $\S2$ from 
\cite{Fe71}. 

Recall that a measure $M$ on $\real$ is called {\itshape canonical} if $M(I)<\infty$ for any finite interval, and
$$
M^+(x)=\int_x^{+\infty} \frac{1}{u^2} M(du)<+\infty, \quad M^-(x) =\int_{-\infty}^{-x} \frac{1}{u^2} M(dy)<+\infty, \quad x>0.
$$
A sequence of canonical measures $\{M_n\}$ converges to a canonical measure \emph{properly}  if $M_n(I)\to M(I)$ for any finite interval, and $M_n^+(x)\to M^+(x)$, $M_n^{-}(x)\to M^{-}(x)$ for every $x>0$. In this case we write $M_n\to M$.

\begin{theorem}\cite{Fe71}\label{fel1}
Let $\{X_{k,n}\, 1\leq k\leq a_n\}$ be such that $X_{k,n}$ are  i.i.d. for any $1\leq k\leq a_n$, $a_n\to\infty$ as $n\to\infty$, and satisfy 
  \begin{equation}
\lim_{n\to\infty}  P\{|X_{1,n}|\geq \varepsilon\}=0.\label{small}
\end{equation}
Let $F_n(du)$ be the distribution function of $X_{1,n}$, 
$$
M_n(du):=a_n u^2 F_n(du),\quad \beta_n:= \int_\real \sin u F_n(du).
$$
 Then $S_n:= X_{1,n}+...+X_{a_n,n}$ converges in distribution to some random variable $S$ if and only of
\begin{equation}
M_n\to M, \quad a_n\beta_n\to \beta, \quad \text{as} \quad n\to\infty \label{conv-mes}
\end{equation}
for some $\beta\in \real$ and some canonical measure $M$. In this case the characteristic function $\Phi(z)$ of $S$ is given by
\begin{equation}
\Phi(z)=\exp\left\{ i\beta z +\int_\real \frac{e^{izu}-1-iz\sin u}{u^2} M(du)\right\}. \label{LH1}
\end{equation}
\end{theorem}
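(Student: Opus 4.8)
The plan is to prove this via characteristic functions, since the stated result is fundamentally a statement about convergence of triangular arrays to infinitely divisible laws, and the Lévy–Khintchine representation in the form \eqref{LH1} is exactly what one obtains by analyzing characteristic functions.

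First I would write down the characteristic function of $S_n$. Since the $X_{k,n}$ are i.i.d.\ in each row, we have $\Ee e^{izS_n} = \left(\Ee e^{izX_{1,n}}\right)^{a_n} = \left(\int_\real e^{izu}\,F_n(du)\right)^{a_n}$. The key algebraic step is to insert the centering term $iz\sin u$ and rewrite this as $\exp\left\{a_n\log\bigl(1 + \psi_n(z)\bigr)\right\}$, where
\[
\psi_n(z) = \int_\real \bigl(e^{izu} - 1\bigr)\,F_n(du).
\]
I would further decompose $\psi_n(z) = iz a_n^{-1}\cdot a_n\beta_n + \int_\real \bigl(e^{izu} - 1 - iz\sin u\bigr)\,F_n(du)$, using $\beta_n = \int_\real \sin u\,F_n(du)$. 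The integrand $e^{izu} - 1 - iz\sin u$ is $O(u^2)$ near the origin, which is precisely why it pairs naturally with the measure $M_n(du) = a_n u^2 F_n(du)$: one gets $a_n\int_\real \bigl(e^{izu}-1-iz\sin u\bigr)F_n(du) = \int_\real \frac{e^{izu}-1-iz\sin u}{u^2}\,M_n(du)$.

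Next I would handle the asymptotics. The uniform smallness condition \eqref{small} forces $\psi_n(z)\to 0$ uniformly on compact $z$-sets, so that $a_n\log(1+\psi_n(z)) = a_n\psi_n(z) + O(a_n|\psi_n(z)|^2)$, and one must verify that the error term vanishes — this uses that $|\psi_n(z)|$ is itself $O(a_n^{-1})$ once multiplied through, i.e.\ $a_n\psi_n(z)$ stays bounded while $\psi_n(z)\to 0$. Then I would invoke the two convergence hypotheses in \eqref{conv-mes}: $a_n\beta_n\to\beta$ dispatches the centering term, and the proper convergence $M_n\to M$ lets me pass to the limit in $\int_\real \frac{e^{izu}-1-iz\sin u}{u^2}\,M_n(du)$. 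For this last passage I would split the integral into a neighborhood of the origin and its complement, using boundedness of the continuous bounded integrand $g_z(u) = (e^{izu}-1-iz\sin u)/u^2$ (extended continuously by $-z^2/2$ at $u=0$) together with the definition of proper convergence, which controls both the mass $M_n(I)$ on finite intervals and the tails $M_n^{\pm}(x)$. The converse direction (necessity) runs the same computation backwards: convergence of $\Phi_n$ to a nontrivial limit forces convergence of the exponents, from which \eqref{conv-mes} is extracted by choosing appropriate test functions or inverting the transform.

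The main obstacle I anticipate is the tail control in the passage to the limit: proper convergence $M_n\to M$ is a delicately chosen notion (convergence on finite intervals plus separate convergence of the tail integrals $M^{\pm}$), and one must check that the integrand $g_z(u)$ interacts correctly with both pieces. Specifically, $g_z(u)$ is bounded but does not decay, so the tails $\int_{|u|>x} g_z(u)\,\frac{M_n(du)}{u^2}$ are not automatically controlled by finite-interval convergence; this is exactly where the auxiliary functions $M_n^{\pm}(x)\to M^{\pm}(x)$ enter, and making the approximation uniform in the truncation level $x$ is the technical heart of the argument. Establishing the converse rigorously — ruling out escape of mass and ensuring the limiting $M$ is genuinely a canonical measure — is the other delicate point.
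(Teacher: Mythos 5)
First, a point of order: the paper contains no proof of this statement --- it is quoted verbatim from Feller (Theorem 2, Ch.\ XVII \S 2 of \cite{Fe71}) precisely so that it can be used as a black box. So your attempt can only be compared with the standard proof, not with anything in the paper itself.

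Your sufficiency half is a correct outline of that standard argument: write $\Phi_n=\exp\{a_n\log(1+\psi_n)\}$, split off the centering $iz\beta_n$, recognize $a_n\int_\real (e^{izu}-1-iz\sin u)\,F_n(du)=\int_\real u^{-2}(e^{izu}-1-iz\sin u)\,M_n(du)$, kill the error $a_n\log(1+\psi_n)-a_n\psi_n$ via $a_n|\psi_n|^2\to 0$, and pass to the limit by splitting into a compact part (finite-interval convergence of $M_n$) and tails (controlled by $M_n^{\pm}\to M^{\pm}$, since the numerator is bounded by $2+|z|$). One minor inconsistency: having defined $g_z(u)=(e^{izu}-1-iz\sin u)/u^2$, you later assert $g_z$ "does not decay"; with the $u^{-2}$ included it decays like $u^{-2}$, which is exactly why its tail integrals are comparable to $M_n^{\pm}(x)$ --- this does not affect the structure of the argument.

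The genuine gap is the necessity direction, which is half of the "if and only if" and is the harder half; "runs the same computation backwards" is not an accurate description of it, and the one sentence you devote to it does not contain the needed ideas. From $S_n\Rightarrow S$ you get pointwise convergence of $\Phi_n$; to pass to exponents you must first show the limit characteristic function never vanishes. Then, from convergence of $a_n(\theta_n(z/b_n)-1)$, you must extract proper convergence of the measures $M_n$ themselves, and this requires a compactness/selection argument: uniform boundedness of $M_n$ on finite intervals together with uniformly small tails $M_n^{\pm}(x)$. Feller obtains both at once by averaging the real part of the exponent over $z\in[0,h]$, which produces $\int_\real u^{-2}\bigl(1-\tfrac{\sin hu}{hu}\bigr)M_n(du)$, and the kernel $1-\tfrac{\sin hu}{hu}$ dominates a constant multiple of $\min(1,h^2u^2)$; this is what rules out escape of mass to infinity or into the origin. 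One then applies a Helly-type selection theorem and, crucially, uniqueness of the canonical representation $(\beta,M)\mapsto\Phi$ to show every subsequential limit is the same, forcing $M_n\to M$ and $a_n\beta_n\to\beta$. You explicitly flag "ruling out escape of mass" as the delicate point, but flagging it is not supplying it: without the averaging bound and the uniqueness argument, the necessity half of the theorem is missing.
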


The function
\begin{equation}
\psi(z):= -i\beta z +\int_\real \frac{1-e^{izu}+iz\sin u}{u^2} M(du)=-i\beta z + \phi(z)\label{LH2}
 \end{equation}
is called the \emph{characteristic exponent} of the infinitely divisible variable $S$. Put
\begin{equation}
\psi_n(z):= -i\beta_n  z +\int_\real \frac{1-e^{izu}+iz\sin u}{u^2} M_n(du)= -i\beta_n  z +\phi_n(z).  \label{LH3}
\end{equation}

\begin{remark}\label{rem1}
 Of course, one can formulate Theorem~\ref{fel1} with the function $\I_{|u|\leq1}$ instead of $\sin u$ under the integral, but for technical convenience we need the L\'evy representation \eqref{LH1}.
\end{remark}
Sometimes, especially when the convergence in Theorem~\ref{fel1} is that to a stable law (cf. \cite{IL71}) it is more convenient to consider the random variables in the form 
\begin{equation}
X_{k,n}= \frac{\xi_{k,n}}{b_n}, \label{X}
\end{equation}
where  the variables  $\xi_{k,n}$, $1\leq k\leq a_n$,  are i.i.d. for each $n$, and the sequence $(b_n)_{n\geq 1}$ satisfies certain growth assumptions.
In the paper we assume that the random variables $X_{k,n}$ are of the form \eqref{X}. We 
 assume that the conditions of Theorem~\ref{fel1} hold true, and  thus 
$S_n= \tfrac{\xi_{1,n}+..+\xi_{k,n}}{b_n}$ converges weakly as $n\to \infty$ to some infinitely divisible distribution $S$. Under some conditions on the sequences $(a_n)_{n\geq1}$, $(b_n)_{n\geq1}$,   and on the distribution of $\xi_{1,n}$ (cf. \cite{KKu07}), $S_n$ and $S$ possess transition probability   densities and the local limit theorem takes place. Taking this result as the starting point we derive  in Theorem~\ref{lt-main}  (under some additional assumptions) the speed of convergence to the limit density, and illustrated our result by examples.

In order to make the presentation as transparent as possible, we write the  main notation in  Table 1. Finally,  denote by $\hat{m}$ the symmetrization of the measure $m$, i.e.
$\hat{m}(A):= \frac{m(A)+m(-A)}{2}$ for any Borel set $A\in \real$.

\begin{table}\label{1}
    \caption{Notation}\centering
    \begin{tabular}{|c| c |c |c|}
    \hline \hline
    Variable & Char. function & Probab. measure & Probab. density\\ [0.5ex]
    \hline
    $\xi_{1,n}$ & $\theta_n(z)$ & $G_n(du)$ & $g_n(u)$ \\
    $X_{1,n}$ & $\theta_n(z/b_n)$ & $F_n(du)\equiv G_n(b_n du)$ & $f_n(u)\equiv  b_n g_n(b_n u)$  \\
    $S_n$ & $\Phi_n(z)\equiv \theta_n^{a_n}(z/b_n)$ & $P_n(dx)$& $p_n(x)$\\
    $S$ & $\Phi(z)=e^{-\psi(z)}$ & $P(dx)$ & $p(x)$\\
    \hline
    \end{tabular}
\end{table}


\section{Main result}

We assume that the assumptions below hold true:

\textbf{A.}  for any $n\geq 1$ the variable $\xi_{1,n}$  possesses the density $g_n(x)$;

\textbf{B.}  $\exists \alpha\in (0,2)$ such that $\RR\psi(z)\geq c |z|^\alpha$ for $|z|$ large enough;

\textbf{C.}   $\forall\delta>0$ we have  $N(\delta):= \underset{n\geq 1,\, |z|\geq \delta}{\sup} |\theta_n(z)|<1$;

\textbf{D.} $\sup_{n\geq 1} \int_\real g_n^2(x)dx<\infty$;

\textbf{E.} $b_n\to \infty$, $\frac{\ln b_n}{a_n} \to 0$ as  $n\to\infty$;

\textbf{F}. for $n\geq 1$ one of the conditions below is satisfied:

a) there exists $c(\delta)>0$, $0<\kappa<2$, such that 
\begin{equation}
 a_n \int_{\real} (1-\cos(zu)) \hat{F}_n(du) \geq  c(\delta)|z|^\kappa \quad \text{for all $|z|\leq \delta b_n$;} \label{3a}
 \end{equation}

b) $\hat{F}\leq \hat{F}_n$ on $\real$.

\textbf{G}.   $\exists \delta>0$ such that  $\underset{n\geq 1,\, |z|\leq \delta}{\inf}  |\mathrm{Re} \theta_n(z)|>0$.

\medskip
\begin{remark}\label{rem2}
Condition \textbf{A}, and \textbf{C}--\textbf{E} are taken from  \cite{KKu07}. Instead of condition \textbf{B} in \cite{KKu07} another condition was assumed (namely, a version of the  Kallenberg condition  \cite{Ka81} for the sequence of measures $\hat{M}_n$), which in fact implies \textbf{B}.
\end{remark}

Let
 \begin{equation}
 \begin{split}
 \gamma_n':&=  \sup_{z\in \real} \frac{|\RR \phi(z)-\RR \phi_n(z)|}{1+z^2},  \quad \gamma_n'':=  \sup_{z\in \real} \frac{|\II \phi(z)- \II \phi_n(z)|}{1+z^2},
 \\
  \chi_n:&= |a_n\beta_n-\beta|,
  \end{split}\label{gg}
 \end{equation}
 where $\beta_n$ has the same meaning as in Theorem~\ref{fel1}. 
 
From now  we fix $\delta>0$, for which the above conditions  hold true.
 For  some fixed $0<\epsilon<1$   put
 \begin{equation}
 \rho_{\epsilon, \delta}(n):=\max \left(\chi_n, \gamma_n',\gamma_n'', a_n^{-1}, e^{a_n(\ln N(\delta)+\epsilon) } e^{-(1-\epsilon)\RR \psi(\delta b_n)}\right),\label{rho1}
 \end{equation}
where $N(\delta)$ is defined in \textbf{C}.

\begin{theorem}\label{lt-main}
Suppose that conditions \eqref{small}, \eqref{conv-mes}, and \textbf{A}--\textbf{G}  are satisfied.  Then the distributions $S_n$ and $S$ possess, respectively, the densities $p_n(x)$ and $p(x)$, and
\begin{equation}
\sup_{x\in \real} |p_n(x)-p(x)| \leq C  \rho_{\epsilon, \delta}(n), \quad n\to\infty, \label{lt-eq}
\end{equation}
where $\rho_{\epsilon, \delta}(n)$ is given by \eqref{rho1}.
\end{theorem}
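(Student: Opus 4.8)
The plan is to represent both densities by Fourier inversion and to estimate the integral of the difference of the characteristic functions. First I would check that the densities exist: condition \textbf{B} gives $|\Phi(z)|=e^{-\RR\psi(z)}\le e^{-c|z|^\alpha}$ for large $|z|$, so $\Phi\in L^1(\real)$; for $\Phi_n(z)=\theta_n^{a_n}(z/b_n)$ integrability follows by combining \textbf{C} (the modulus is $\le N(\delta)<1$ once $|z|\ge\delta b_n$) with the uniform bound \textbf{D} via Plancherel, $\int_\real|\theta_n(z)|^2\,dz=2\pi\int_\real g_n^2(x)\,dx$. Inversion then gives
\[
\sup_{x\in\real}|p_n(x)-p(x)|\le\frac{1}{2\pi}\int_\real|\Phi_n(z)-\Phi(z)|\,dz,
\]
and I would split this integral over the central region $\{|z|\le\delta b_n\}$ and the tail $\{|z|>\delta b_n\}$.

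On the central region I would insert the intermediate exponential $e^{-\psi_n(z)}$ from \eqref{LH3} and use $|\Phi_n-\Phi|\le|\Phi_n-e^{-\psi_n}|+|e^{-\psi_n}-e^{-\psi}|$. For the second term, $|e^{-a}-e^{-b}|\le|a-b|e^{-\min(\RR a,\RR b)}$ together with the definitions \eqref{gg} yields $|e^{-\psi_n}-e^{-\psi}|\le C(\gamma_n'+\gamma_n''+\chi_n)(1+z^2)e^{-\min(\RR\psi_n,\RR\psi)}$; the crucial input is the lower bound $\RR\psi_n(z)\ge c|z|^\kappa$ on $\{|z|\le\delta b_n\}$, which follows from \textbf{F} since $\RR\phi_n(z)=a_n\int_\real(1-\cos(zu))\hat{F}_n(du)$, so that $\int_\real(1+z^2)e^{-\RR\psi_n}\,dz$ converges and this term contributes $C(\gamma_n'+\gamma_n''+\chi_n)$. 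For the first term I would write $\Phi_n=e^{-\Lambda_n}$ with $\Lambda_n(z):=-a_n\ln\theta_n(z/b_n)$, where condition \textbf{G} guarantees that $\theta_n(z/b_n)$ stays in a region on which the principal logarithm and the expansion $-\ln(1-w)=w+\tfrac{w^2}{2}+\dots$ (with $w=1-\theta_n(z/b_n)$, small by \eqref{small}) are valid. The leading discrepancy between $\Lambda_n$ and $\psi_n$ is $\tfrac{a_n}{2}(1-\theta_n(z/b_n))^2=\tfrac{1}{2a_n}\bigl(a_n(1-\theta_n(z/b_n))\bigr)^2$, which is $O(a_n^{-1})$ times a polynomial weight since $a_n(1-\theta_n(z/b_n))$ stays bounded on the central region. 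Bounding $|\Phi_n-e^{-\psi_n}|\le|\Lambda_n-\psi_n|\,e^{-\min(\RR\Lambda_n,\RR\psi_n)}$ and integrating against the decay again furnished by \textbf{F} yields the contribution $Ca_n^{-1}$.

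On the tail $\{|z|>\delta b_n\}$ I would estimate $|\Phi_n|$ and $|\Phi|$ separately. The limit part is harmless, since $\int_{|z|>\delta b_n}e^{-\RR\psi(z)}\,dz\le\int_{|z|>\delta b_n}e^{-c|z|^\alpha}\,dz$ decays faster than any term in \eqref{rho1}. For $\Phi_n$ I would substitute $w=z/b_n$, giving $b_n\int_{|w|>\delta}|\theta_n(w)|^{a_n}\,dw$, and exploit two features of $|\theta_n(w)|^{a_n}=e^{-\RR\Lambda_n(w)}$ on $\{|w|\ge\delta\}$: the uniform smallness $|\theta_n(w)|\le N(\delta)<1$ from \textbf{C}, contributing the factor $e^{a_n\ln N(\delta)}$, and the pointwise decay of $\RR\Lambda_n$ controlled by \textbf{F}, which — compared through \textbf{B} to the limit exponent at the cut point — contributes the factor $e^{-(1-\epsilon)\RR\psi(\delta b_n)}$; the remaining $L^2$-mass is handled by \textbf{D}, and the prefactor $b_n$ is absorbed into $e^{\epsilon a_n}$ by \textbf{E}, since $\ln b_n/a_n\to0$. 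Together these reproduce the fifth term of \eqref{rho1}. Collecting the contributions from the two regions, each is dominated by one of the five quantities in the maximum defining $\rho_{\epsilon,\delta}(n)$, which yields \eqref{lt-eq}.

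I expect the tail estimate for $|\Phi_n|$ to be the main obstacle: one must simultaneously use the uniform smallness of the modulus from \textbf{C}, the $L^2$-control from \textbf{D}, the growth condition \textbf{E} on $b_n$, and the comparison of the decay of $|\theta_n|^{a_n}$ with the limiting exponent $\RR\psi$ supplied by \textbf{F} and \textbf{B}, balancing the parameter $\epsilon$ so that all of these combine into the single sharp factor $e^{a_n(\ln N(\delta)+\epsilon)}e^{-(1-\epsilon)\RR\psi(\delta b_n)}$. A secondary technical point is establishing the uniform lower bound $\RR\psi_n(z)\ge c|z|^\kappa$, and its analogue for $\RR\Lambda_n$, on the whole central region — especially under alternative \textbf{F}b) where only $\hat{F}\le\hat{F}_n$ is available — since this bound is exactly what makes every polynomially weighted exponential integral in the central region converge.
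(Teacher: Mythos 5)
Your central-region analysis is, in substance, the paper's own: the paper writes $\Phi_n-\Phi=\Phi\,(e^{H_n}-1)$ with $H_n(z)=\psi(z)+a_n\ln\theta_n(z/b_n)$ and estimates $\RR H_n$, $\II H_n$, which is the same comparison through the accompanying exponent $\psi_n$ that you make; the contributions $\gamma_n',\gamma_n'',\chi_n,a_n^{-1}$ arise there exactly as you describe. One technical caveat: your expansion $-\ln(1-w)=\sum_{k\ge1}w^k/k$ with $w=1-\theta_n(z/b_n)$ requires $|w|<1$, which neither \eqref{small} nor \textbf{G} guarantees uniformly on all of $\{|z|\le\delta b_n\}$ (\textbf{G} only keeps $\RR\theta_n$ away from $0$, and $|w|$ need not be small when $|z|$ is of order $\delta b_n$). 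The paper avoids this by treating modulus and argument separately, $\ln\theta_n=\ln|\theta_n|+i\arctan(\II\theta_n/\RR\theta_n)$, using the real expansion of $\ln$ on $(0,1)$ and the expansion of $\arctan$, with \textbf{G} controlling the denominators; your argument should be reorganized in that form.

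The genuine gap is in the tails, where you have interchanged the roles of the two pieces. First, you extract the factor $e^{-(1-\epsilon)\RR\psi(\delta b_n)}$ from the $\Phi_n$-tail by invoking \textbf{F} there; but \textbf{F} is assumed only on $\{|z|\le\delta b_n\}$ and gives no pointwise control of $|\theta_n(w)|^{a_n}$ for $|w|>\delta$, so this step fails. All that is available on that region is \textbf{C} and \textbf{D}: as in the paper, $\int_{|z|>\delta b_n}|\Phi_n|\,dz\le b_nN(\delta)^{a_n-2}\int_\real|\theta_n(x)|^2dx\le Cb_ne^{a_n\ln N(\delta)}\le Ce^{a_n(\ln N(\delta)+\epsilon)}$, via Plancherel and \textbf{E}; no second exponential factor appears. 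Second, you dismiss the $\Phi$-tail as decaying faster than every term of \eqref{rho1} after replacing $\RR\psi(z)$ by $c|z|^\alpha$; this is not justified, because \textbf{B} is only a one-sided bound. If, say, $\RR\psi(z)\asymp z^2$, then $e^{-(1-\epsilon)\RR\psi(\delta b_n)}$, and hence the fifth entry of \eqref{rho1}, is far smaller than your bound $\asymp b_n^{1-\alpha}e^{-c(\delta b_n)^\alpha}$, and absent condition \textbf{H} nothing forces $b_n^{1-\alpha}e^{-c(\delta b_n)^\alpha}\le Ca_n^{-1}$ either; so your estimate of this tail is not dominated by $\rho_{\epsilon,\delta}(n)$. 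In the paper this tail is precisely the origin of the factor $e^{-(1-\epsilon)\RR\psi(\delta b_n)}$: one writes $e^{-\RR\psi(z)}=e^{-(1-\epsilon)\RR\psi(z)}e^{-\epsilon\RR\psi(z)}$, bounds the first factor by its value at the cut point $|z|=\delta b_n$, and integrates the second using \textbf{B}. The upshot is that each tail produces one exponential factor, so the argument yields the maximum (equivalently, the sum) of $e^{a_n(\ln N(\delta)+\epsilon)}$ and $e^{-(1-\epsilon)\RR\psi(\delta b_n)}$ rather than their product; your attempt to obtain the product term of \eqref{rho1} from a single tail cannot be carried out from the stated hypotheses (and, for what it is worth, the paper's own estimates $I_2+I_3$ also deliver only the maximum, not the product).
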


One can simplify the expression for the speed of convergence, but at the expense of some additional assumptions on $a_n$ and $b_n$. We say that a sequence $(c_n)_{n\geq 1}$ satisfies condition 
\textbf{H},  if there exist $a,b>0$ such that
$$
0<\liminf_{n\to \infty} \frac{c_n}{n^a}\leq \limsup_{n\to\infty} \frac{c_n}{n^b}<\infty. 
$$

\begin{corollary}\label{cor2}
Suppose conditions of Theorem~\ref{lt-main} hold true, and assume in addition that the  sequences $(a_n)_{n\geq 1}$ and $(b_n)_{n\geq 1}$  satisfy \textbf{H}.  Then
\begin{equation}
\sup_{x\in \real} |p_n(x)-p(x)|\leq C \max\Big(\gamma_n',\gamma_n'',\chi_n,a_n^{-1}\Big). \label{eq-rho3}
\end{equation}
\end{corollary}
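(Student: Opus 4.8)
The plan is to read the corollary off directly from Theorem~\ref{lt-main} by showing that, under condition \textbf{H}, the fifth entry in the maximum defining $\rho_{\epsilon,\delta}(n)$ in \eqref{rho1} is negligible compared with $a_n^{-1}$, so that it can be absorbed into the remaining four terms. Write that entry as $R_n := e^{a_n(\ln N(\delta)+\epsilon)}\,e^{-(1-\epsilon)\RR\psi(\delta b_n)}$. The first step is to fix the free parameter $\epsilon$ correctly: since $N(\delta)<1$ by condition \textbf{C}, we have $\ln N(\delta)<0$, so I would choose $\epsilon\in(0,-\ln N(\delta))$ and set $\eta:=-(\ln N(\delta)+\epsilon)>0$, which turns the first factor into $e^{-\eta a_n}$.

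Next I would dispose of the second factor trivially. Because $\Phi=e^{-\psi}$ is a characteristic function, $|\Phi(z)|=e^{-\RR\psi(z)}\le1$, so $\RR\psi\ge0$ and hence $e^{-(1-\epsilon)\RR\psi(\delta b_n)}\le1$, giving $R_n\le e^{-\eta a_n}$. One could say more here: condition \textbf{B} forces $\RR\psi(\delta b_n)\ge c(\delta b_n)^\alpha\to\infty$, so together with the polynomial growth of $b_n$ supplied by \textbf{H} the second factor is in fact super-polynomially small, but this strengthening is not required for the conclusion.

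Finally I would compare $e^{-\eta a_n}$ with $a_n^{-1}$. Condition \textbf{H} applied to $(a_n)$ yields constants with $c_1 n^{a}\le a_n\le c_2 n^{b}$ for large $n$, so $a_n\to\infty$ and $a_n^{-1}\ge c_2^{-1}n^{-b}$ is only polynomially small, while $e^{-\eta a_n}$ decays faster than any power of $n$; hence $R_n\le e^{-\eta a_n}=o(a_n^{-1})$. Equivalently, the elementary inequality $t e^{-\eta t}\le(\eta e)^{-1}$ gives the non-asymptotic bound $e^{-\eta a_n}\le(\eta e)^{-1}a_n^{-1}$ with no recourse to \textbf{H} at all. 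Either way $R_n\le C' a_n^{-1}$, so $R_n$ drops out of the maximum at the cost of enlarging the constant, and $\rho_{\epsilon,\delta}(n)\le C''\max(\gamma_n',\gamma_n'',\chi_n,a_n^{-1})$; substituting into \eqref{lt-eq} then yields \eqref{eq-rho3}. The only genuinely substantive point is the sign choice making $\eta>0$; the rest is the routine observation that an exponentially small quantity cannot dominate a polynomially small one, so I expect no real obstacle here.
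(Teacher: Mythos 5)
Your proof is correct as a derivation from the statement of Theorem~\ref{lt-main}, and it takes a genuinely different route from the paper. The paper does not use the theorem as a black box: it goes back inside its proof, keeps the decomposition $\Delta_n \le I_1(n)+I_2(n)+I_3(n)$, and uses \textbf{H} to show that $I_2(n)$ and $I_3(n)$ decay like $o(n^{-k})$ for every $k$, hence are absorbed into $a_n^{-1}$ (which by \textbf{H} is only polynomially small), while $I_1(n)\le C\max(\chi_n,\gamma_n',\gamma_n'',a_n^{-1})$ was already established. You instead work directly with \eqref{rho1}: fix $\epsilon\in\big(0,\min(1,-\ln N(\delta))\big)$ so that $\eta:=-(\ln N(\delta)+\epsilon)>0$ (legitimate, since this is exactly the regime in which the theorem is proved, and the fifth entry of \eqref{rho1} is monotone in $\epsilon$), bound $e^{-(1-\epsilon)\RR\psi(\delta b_n)}\le 1$ using $\RR\psi\ge 0$, and dominate $e^{-\eta a_n}$ by $a_n^{-1}$ via $te^{-\eta t}\le(\eta e)^{-1}$. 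This is shorter, non-asymptotic, and — as you observe — does not even invoke \textbf{H}.

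One caveat, which is really a defect of the paper rather than of your logic: the fifth entry of \eqref{rho1} as printed is the \emph{product} $e^{a_n(\ln N(\delta)+\epsilon)}e^{-(1-\epsilon)\RR\psi(\delta b_n)}$, but the theorem's proof bounds $I_2(n)$ and $I_3(n)$ by these two exponentials \emph{separately}, so what is actually established is \eqref{lt-eq} with the maximum (or sum) of the two exponentials in place of their product. Your trick of bounding the $\RR\psi$ factor by $1$ exploits the product form and fails under this corrected reading: there the term $e^{-(1-\epsilon)\RR\psi(\delta b_n)}$ must be dominated by $a_n^{-1}$ on its own, and for that you genuinely need \textbf{B} (giving $\RR\psi(\delta b_n)\ge c(\delta b_n)^\alpha$) together with \textbf{H} (so that $b_n$ grows at least polynomially while $a_n^{-1}$ decays at most polynomially) — precisely the strengthening you mention in passing and then set aside as unnecessary. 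To make your argument robust, promote that parenthetical remark into the proof proper; with it, your black-box derivation covers both readings of \eqref{rho1}, and the role of the hypothesis \textbf{H} — which the corollary explicitly assumes, and which the paper's own proof uses — becomes visible.
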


\begin{corollary}\label{cor3}
Suppose that conditions \textbf{A}--\textbf{F}  and \textbf{H}  hold true, the densities  $p_n(x)$ and $p(x)$ are symmetric, and 
\begin{equation}
\Phi_n(z)\geq \Phi(z) \quad \forall n\geq 1, \label{P}
\end{equation}
uniformly in $\{z: \,\, |z|\leq \delta b_n\}$. Then 
\begin{equation}
\sup_{x\in \real} |p_n(x)-p(x)|\leq C \Big(\gamma_n'+ r(n))\Big), \label{eq-rho4}
\end{equation}
where $r(n)=o(n^{-k}) $ as $n\to \infty$ for any $k\geq 1$. 
\end{corollary}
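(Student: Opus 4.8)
The plan is to start, exactly as in Theorem~\ref{lt-main}, from the Fourier inversion bound
\[
\sup_{x\in\real}|p_n(x)-p(x)|\le\frac{1}{2\pi}\int_\real|\Phi_n(z)-\Phi(z)|\,dz,
\]
and to split the integral into the main range $|z|\le\delta b_n$ and the tail $|z|>\delta b_n$. The symmetry assumption makes $\Phi_n$ and $\Phi$ real and even; at the level of the summands it makes $\theta_n$ real, and it forces $\beta=0$ together with $\II\phi\equiv\II\phi_n\equiv0$ in \eqref{LH2}--\eqref{LH3}, so that $\gamma_n''=\chi_n=0$ and only $\gamma_n'$ survives among the convergence terms. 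The whole point is that the one-sided hypothesis \eqref{P} will let me estimate the main range by $\gamma_n'$ alone, bypassing the second-order (``compound Poisson'') remainder of order $a_n^{-1}$ that produces the $a_n^{-1}$ entry in \eqref{rho1}.

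On the main range I would write $\Phi_n(z)=e^{-\tilde\psi_n(z)}$ with $\tilde\psi_n(z):=-a_n\ln\theta_n(z/b_n)$, which is real and nonnegative because symmetry together with \textbf{G} keeps $\theta_n(z/b_n)>0$ for $|z|\le\delta b_n$. Hypothesis \eqref{P} reads $\tilde\psi_n\le\psi$, so from $1-e^{-t}\le t$ for $t\ge0$,
\[
0\le\Phi_n(z)-\Phi(z)=e^{-\tilde\psi_n(z)}\bigl(1-e^{-(\psi(z)-\tilde\psi_n(z))}\bigr)\le\bigl(\psi(z)-\tilde\psi_n(z)\bigr)\,\Phi_n(z).
\]
The elementary inequality $\ln x\le x-1$ gives $\tilde\psi_n\ge\psi_n$, where in the symmetric case $\psi_n(z)=a_n\bigl(1-\theta_n(z/b_n)\bigr)=\RR\phi_n(z)$, whence
\[
\psi(z)-\tilde\psi_n(z)\le\psi(z)-\psi_n(z)=\RR\phi(z)-\RR\phi_n(z)\le\gamma_n'(1+z^2).
\]
Thus $0\le\Phi_n-\Phi\le\gamma_n'(1+z^2)\Phi_n$ on $|z|\le\delta b_n$. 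Since condition \textbf{F} yields $\psi_n(z)\ge c(\delta)|z|^\kappa$ and $\Phi_n\le e^{-\psi_n}$, I obtain $\Phi_n(z)\le e^{-c(\delta)|z|^\kappa}$ there, so $\int_{|z|\le\delta b_n}(1+z^2)\Phi_n(z)\,dz\le\int_\real(1+z^2)e^{-c(\delta)|z|^\kappa}\,dz=:C<\infty$ uniformly in $n$; this bounds the main range by $C\gamma_n'$.

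For the tail I would use only crude estimates. By \textbf{B}, $\int_{|z|>\delta b_n}|\Phi(z)|\,dz\le C\,e^{-c(\delta b_n)^\alpha}$, and after the substitution $y=z/b_n$, conditions \textbf{C} and \textbf{D} give
\[
\int_{|z|>\delta b_n}|\Phi_n(z)|\,dz=b_n\int_{|y|>\delta}|\theta_n(y)|^{a_n}\,dy\le C\,b_n\,N(\delta)^{a_n-2},
\]
where I used $|\theta_n(y)|\le N(\delta)$ on $|y|\ge\delta$ and Plancherel with \textbf{D} to bound $\int|\theta_n|^2$. Under \textbf{H} the sequences $a_n,b_n$ are of polynomial order while $N(\delta)<1$ and $(\delta b_n)^\alpha$ grows polynomially, so both tail contributions are $o(n^{-k})$ for every $k$; their sum is the remainder $r(n)$. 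Adding the two ranges yields \eqref{eq-rho4}. The substantive step is the chain in the second paragraph: the one-sided bound $\Phi_n-\Phi\le(\psi-\tilde\psi_n)\Phi_n$ together with the sandwich $\psi_n\le\tilde\psi_n\le\psi$ is precisely what turns the dominance \eqref{P} into an estimate linear in $\gamma_n'$ with no second-order remainder. The two points to watch are the positivity of $\theta_n(z/b_n)$ on the main range, needed to pass to real logarithms (this is where \textbf{G} and the symmetry enter), and the uniform integrability of $(1+z^2)\Phi_n$, which rests on the lower bound in \textbf{F}.
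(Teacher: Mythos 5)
Your proof is correct and is essentially the paper's own argument: your sandwich $\psi_n\le\tilde\psi_n\le\psi$ together with $0\le\Phi_n-\Phi\le(\psi-\tilde\psi_n)\Phi_n$ is exactly the paper's observation that $H_n(z)=\psi(z)+a_n\ln\theta_n(z/b_n)$ is real, nonnegative by \eqref{P}, and bounded by $\gamma_n'(1+z^2)$, while your tail bounds are the theorem's $I_2(n)$ and $I_3(n)$, made $o(n^{-k})$ by \textbf{H}. One small caveat: \textbf{G} is not among the hypotheses of Corollary~\ref{cor3}, so you should not invoke it for the positivity of $\theta_n(z/b_n)$; this is harmless, since \eqref{P} gives $\theta_n^{a_n}(z/b_n)\ge e^{-\psi(z)}>0$ on $|z|\le\delta b_n$, and continuity together with $\theta_n(0)=1$ then forces $\theta_n(z/b_n)>0$ there.
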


\begin{remark}
 As one can expect, the oscillation of measures involved in $\gamma_n'$ and $\gamma_n''$ can play the crucial role in the estimation of the speed of convergence. For example, it might be insufficient to know the behaviour of such a  "rough estimate" for  $\gamma_n'$ as below:
$$
\sup_{z\in \real } \frac{\Big|\int_\real (1\wedge |uz|^2) (M_n(du)-M(du))\Big|}{1+z^2},
$$
in particular, when the densities $(g_n)_{n\geq 1}$ have oscillations. Such a situation is illustrated in  Example~\ref{ex1}.


\end{remark}

\section{Proofs}

\begin{proof}[Proof of Theorem~\ref{lt-main}]

Recall that the densities $p_n(x)$ and $p(x)$ can be written as the inverse Fourier transforms of the respective characteristic functions:
\begin{equation}
p(x)=(2\pi)^{-1}\int_\real e^{-iz x}\Phi(z)dz=(2\pi)^{-1}\int_\real e^{-iz x-\psi(z)}dz,\label{p}
\end{equation}
\begin{equation}
p_n(x)=(2\pi)^{-1} \int_\real e^{-izx} \Phi_n(z) dz.\label{pn}
\end{equation}
By \eqref{p} and \eqref{pn} we have 
\begin{align*}
\Delta_n :&= 2\pi \sup_{x\in \real}|p_n(x)-p(x)|\leq  \int_\real |\Phi_n(z)-\Phi(z)|dz
\\&
\leq  \Big(\int_{-\delta b_n }^{\delta b_n} |\Phi_n(z)-\Phi(z)|dz+ \int_{|z|>\delta b_n} |\Phi_n(z)|dz+\int_{|z|>\delta b_n} |\Phi(z)|dz\Big)
\\&
=: I_1(n)+I_2(n)+I_3(n),
\end{align*}
where $\delta>0$. We estimate the terms $I_k(n)$, $k=1,2,3$, separately.

{\itshape Estimation of  $I_1$}. Observe, that
\begin{equation}
\begin{split}
|1-e^{x+iy}|&= |1+e^{2x}-2e^x\cos y|^{1/2}
\\&
= |(1-e^x)^2 +2e^x (1-\cos y)|^{1/2}  
\\&
\leq |(1-e^x)^2 +e^x y^2|^{1/2}
\\&
\leq |1-e^x|+e^{x/2} |y|
\\&
\leq e^{x_+} (|x|+|y|),\label{es1}
\end{split}
\end{equation}
where $x,y\in \real$, and $x_+:= \max (x,0)$.
Denote
\begin{equation}
H_n(z):= \psi(z)+ a_n \ln \theta_n\big(\frac{z}{b_n}\big).\label{Hnz}
\end{equation}
Then by \eqref{es1} we get
$$
I_1(n)\leq \int_{-\delta b_n}^{\delta b_n} e^{-\RR \psi(z)+ (\RR H_n(z))_+} (|\RR H_n(z)|+ |\II H_n(z)|)dz.
$$
Since $\ln (1-z)\leq -z$ for $z\in (0,1)$, then
\begin{equation}
\mathrm{Re}H_n(z)= \mathrm{Re}\,\psi(z)+ a_n \ln \big|\theta_n\big(\frac{z}{b_n}\big)\big|
\leq \mathrm{Re}\,\psi(z) - a_n\big(1-|\theta_n\big(\frac{z}{b_n}\big)\big|\big).\label{reh1}
\end{equation}
Observe, that
\begin{equation}
\Big|\theta_n\big(\frac{z}{b_n}\big)\Big|=\int_\real \cos (zu) \hat{F}_n(du)=a_n^{-1} \int_\real \frac{\cos (zu)}{u^2} \hat{M}_n(du). \label{thetan}
\end{equation}
Therefore, by \eqref{reh1} and  \textbf{F} we have  for  all $n$ large enough and $|z|\leq \delta b_n$
\begin{equation}
\begin{split}
\mathrm{Re}\,\psi(z)- \big(\mathrm{Re}\,H_n(z)\big)_+ &\geq a_n \big(1-|\theta_n\big(\frac{z}{b_n}\big)\big|\big)
\\&
=a_n \int_\real  (1-\cos(uz))\hat{F}_n(du)
\\&
\geq  c(\delta)|z|^\kappa,
\end{split} \label{up1}
\end{equation}
 if \textbf{F}.a) holds true, or
\begin{equation}
\mathrm{Re}\,\psi(z)- \big(\mathrm{Re}\,H_n(z)\big)_+ =\mathrm{Re}\,\psi(z), \label{up2}
\end{equation}
if \textbf{F}.b) is satisfied. On the other hand, for $z\in (0,1)$ we have
\begin{align*}
|\ln z+1-z|&\leq \sum_{k=2}^\infty \frac{(1-z)^k}{k}\leq \frac{1-z}{2} \sum_{k=1}^\infty (1-z)^k
 \\&
 \leq \frac{(1-z)^2}{2z}.
 \end{align*}
Then by \eqref{thetan}  and \textbf{G}
we derive
\begin{equation}
\begin{split}
|\mathrm{Re}H_n(z)|&\leq  \Big|\mathrm{Re}\,\psi(z)-a_n\big(1-
|\theta_n(z/b_n)\big|\big)\Big|+ a_n \Big|\big(1-
|\theta_n(z/b_n)\big|\big)- \ln |\theta_n (z/b_n)| \Big|
\\&
\leq \Big| \int_\real (1-\cos(zu))u^{-2} (\hat{M}(du)-\hat{M}_n(du))\Big|
\\&
\quad + 2^{-1} a_n \Big( \int_\real (1-\cos(zu))\hat{F}_n(du)\Big)^2 \cdot \Big( \int_\real \cos (zu)\hat{F}_n(du)\Big)^{-1}
\\&
\leq c_1 \left(\gamma_n'(1+z^2)+ (2a_n )^{-1}\big( \RR \psi_n(z)\big)^2\right)
\\&
\leq  c_1\left(\gamma_n' (1+z^2 )+(2 a_n )^{-1} \Big((1+z^2)\gamma_n +  (1+z^2)  \Big)^2\right)
\\&
\leq c_2 (1+z^2)^2 (\gamma_n +a_n^{-1}). \label{reh}
\end{split}
\end{equation}

\noindent Next we estimate $|\mathrm{Im}\, H_n(z)|$. Observe that  for $z=x+iy$, where $x,y\in \real$,
$$
\mathrm{Im} \ln z= Arg \,z = \arctan\frac{y}{x},
$$
and  for all $x\in \real$ we have $|\arctan x -x |\leq c_3 |x|^3$,
where $c_3>0$ is some constant.
Therefore,
\begin{equation}
\begin{split}
|\mathrm{Im}&\, H_n(z)| = \Big| \mathrm{Im}\, \psi(z)+ a_n \mathrm{Im}\, \ln \theta_n\big(z/b_n\big)\Big|
\\&
\leq \Big| -\beta z + \int_\real \frac{z\sin u -\sin(zu)}{u^2} M(du) + a_n \arctan\frac{\mathrm{Im}\theta_n (z/b_n)}{\mathrm{Re} \theta_n (z/b_n)} \Big|
\\&
\leq \Big| -\beta z + z\int_\real \frac{\sin u}{u^2}M_n(du)\Big| +
\Big|\int_\real\frac{z\sin u - \sin (zu)}{u^2} (M_n -M)(du)\Big|
\\&
+ \Big| \int_\real \frac{\sin (zu)-z\sin u}{u^2} M_n(du)\Big| \Big| \frac{1}{\mathrm{Re} \theta_n(z/b_n)} -1\Big|
\\&
+ \Big| \int_\real z\frac{\sin u}{u^2} M_n(du)\Big| \Big| \frac{1}{\mathrm{Re} \theta_n(z/b_n)} -1\Big|
+ a_n \Big|\arctan\frac{\mathrm{Im}\theta_n (z/b_n)}{\mathrm{Re} \theta_n (z/b_n)} -\frac{\mathrm{Im}\theta_n (z/b_n)}{\mathrm{Re} \theta_n (z/b_n)}\Big|
\\&
\leq  I_{11}+ I_{12}+I_{13}+ I_{14}+I_{15}.\label{I-10}
\end{split}
\end{equation}
Observe that  by  $M_n\rightarrow M$ and $a_n\beta_n \to \beta$
(cf. \eqref{conv-mes})  we have
\begin{equation}
I_{11}(n)\leq |z||\beta-a_n \beta_n |=|z|\chi_n. \label{I11}
\end{equation}
For $I_{12}(n)$ we have
\begin{equation}
I_{12}(n) \leq \big| \II \phi_n(z)-\II \phi(z)| \leq c_4(1+ z^2) \gamma_n''\label{I21-1}
\end{equation}
Using  \textbf{G}, we derive
\begin{equation}
I_{13}(n)\leq c_5 a_n^{-1} |\II \phi_n(z)| \RR \phi_n(z)\leq c_6 a_n^{-1} (1+z^2)^2. \label{I31-1}
\end{equation}
Analogously,
\begin{equation}
I_{14}(n) \leq c_7 \beta_n  a_n^{-1}|z| \RR \phi_n(z) \leq c_8 a_n^{-1} |z|(1+z^2). \label{I41-1}
\end{equation}
Finally, for $I_{15}$ we have
\begin{align*}
I_{15}&\leq a_n c_3\Big|
\frac{\mathrm{Im}\theta_n (z/b_n)}{\mathrm{Re} \theta_n (z/b_n)}\Big|^3
\leq c_9 a_n  \Big|\int_\real \sin (zu)F_n(du)\Big|^3
\\&
\leq c_9 a_n^{-2} \Big|\int_\real \frac{\sin (zu)-z\sin u}{u^2} M_n(du)+ z\int_\real \frac{\sin u}{u^2}M_n(du)\Big|^3
\\&
= c_9 a_n^{-2} \Big| \mathrm{Im} \phi_n(z)-z\beta_n \Big|^3
\\&
\leq c_{10} a_n^{-2}(1+z^2)^3.
\end{align*}
  Thus, we arrive at
\begin{equation}
\begin{split}
I_1(n) &\leq c_{11} \max(\kappa_n,\gamma_n',\gamma_n'',a_n^{-1}) \int_0^{\delta b_n} e^{-c(\delta) z^\kappa}  (1+z^2)^3 dz
\\&
\leq c_{12} \max(\kappa_n,\gamma_n',\gamma_n'',a_n^{-1}). \label{I1}
\end{split}
\end{equation}

{\itshape Estimation of $I_2$}. We have  by \textbf{C} and \textbf{D}
\begin{align*}
I_2(n)&= \int_{|z|\geq \delta b_n} \Big|\theta_n\Big(\frac{z}{b_n}\Big)\Big|^{a_n} dz
=b_n \int_{|x|\geq \delta } |\theta_n(x)|^{a_n}dx
\\&
\leq b_n N(\delta)^{a_n-2} \int_{|x|\geq \delta} |\theta_n(x)|^2dx
\\&
\leq b_n N(\delta)^{a_n-2} \sup_{n\geq 1} \int_\real g_n^2(x)dx
\\&
=c_{13} b_ne^{a_n\ln N(\delta)}.
\end{align*}
Take $\varepsilon>0$ such that $\ln N(\delta)+\varepsilon<0$. By \textbf{E}, $\frac{\ln b_n}{a_n} \to 0$ as $n\to \infty$, and thus without loss of generality we may assume that  for all $n\geq 1$ we have $\frac{\ln b_n}{a_n}\leq \varepsilon$. Then
$$
b_n e^{a_n \ln N(\delta)} \leq e^{-a_n |\ln N(\delta)+\varepsilon|}.
$$

{\itshape Estimation of $I_3$}. For any $\epsilon>0$
\begin{align*}
I_3(n) &\leq c_{14}\int_{|z|\geq \delta b_n} e^{-\mathrm{Re}\psi(z)}dz
\leq  c_{15} (\epsilon) e^{-(1-\epsilon) \mathrm{Re}\psi(\delta b_n)}.
\end{align*}
Summarizing the estimates for $I_i(n)$, $i=1,2,3$, we derive
$\Delta_n\leq C  \rho_{\epsilon, \delta}(n)$.
\end{proof}

\medskip

\begin{proof}[Proof of Corollaries~\ref{cor2} and \ref{cor3}]
Clearly, the proofs are obtained as slight modifications  of the proof of Theorem~\ref{lt-main}. Since $a_n$ and $b_n$ satisfy \textbf{H}, the terms $I_2(n)$ and $I_3(n)$ decay as $o(n^{-k})$, $n\to\infty$, for any $k\geq 0$. This implies the statement of Corollary~\ref{cor2}. To complete the proof of Corollary~\ref{cor3}, we need to  estimate more precisely  $I_1(n)$.  Let us look closely on the properties of the function $H_n(z)$ from \eqref{Hnz}. Since
both $p_n(x)$ and $p(x)$ are symmetric, $H_n(z)$ is real-valued. Further, condition
\eqref{P} implies that $H_n(z)\geq 0$. Therefore, instead of \eqref{reh} we get
$$
H_n(z) \leq \psi(z)+a_n \ln \theta_n(z/b_n)\leq  \gamma_n'(1+z^2),
$$
which implies
$$
I_1(n)\leq C \gamma_n'.
$$
\end{proof}

\section{Examples}

\begin{example}\label{ex1}
Let $(\xi_n)_{n\geq 1}$ be i.i.d. random variables with probability density
$$
g(u)= c_\alpha \frac{(1-\cos u)}{ |u|^{1+\alpha}}, \quad u\in \real,\quad 0<\alpha<2.
$$
Then one can check  (using Theorem~\ref{fel1} with $a_n=n$ and  $b_n=n^{1/\alpha}$)  that
$$
S_n:= \frac{\xi_1+..+\xi_n}{n^{1/\alpha}}\Rightarrow S,
$$
where $S$ is a symmetric $\alpha$-stable distribution.  In this case the respective measure $M(du)$ in \eqref{LH1} is equal to $c_\alpha |u|^{1-\alpha} du$, and after the appropriate choice of $c_\alpha$ we have $\psi(z)=|z|^\alpha$. For example, in the case $\alpha=1$ we must chose $c_\alpha=1/\pi$. Clearly, conditions \textbf{A},  \textbf{B}, \textbf{D} and \textbf{G} are satisfied. Condition \textbf{C} is the Cramer condition (cf. \cite{Lu79}) for the characteristic function of $\xi_1$, which is satisfied since the law of $\xi_1$ is absolutely continuous.

Let us check condition \textbf{F}.   Consider
$$
\int_0^\infty  \frac{1-\cos (zu)}{u^{1+\alpha}} \cos (n^{1/\alpha}u)du  = |z|^\alpha \int_0^\infty \frac{1-\cos u}{u^{1+\alpha}} \cos (n^{1/\alpha}u/z)du.
$$
We need to estimate from above
$$
I(\alpha,k):=\int_0^\infty  \frac{1-\cos v}{v^{1+\alpha}} \cos (kv)dv.
$$
Note that for $\alpha=1$ we have  (cf. \cite{BE69}, p.28)
\begin{equation}
I(1,k)= \frac{\pi}{2} (1-|k|)_+.
\end{equation}
It is also possible to calculate $I(\alpha,k)$ for  $\alpha\in (0,2)\backslash \{1\}$.  Integrating by parts, we get for any $k>0$
\begin{align*}
I(\alpha,k)&=
 \frac{\sin kv}{k} \cdot \frac{1-\cos v}{v^{1+\alpha}}  \Big|_{0}^\infty- \frac{1}{k} \int_0^\infty \sin(kv) \frac{v\sin v - (1+\alpha)(1-\cos v)}{v^{2+\alpha}}dv
\\&
= -\frac{1}{k} \int_0^\infty \sin(kv) \frac{v\sin v - (1+\alpha)(1-\cos v)}{v^{2+\alpha}}dv.
\end{align*}
The integrals
$$
I_1(\alpha,k):= \int_0^\infty \frac{\sin (kv)\sin v}{v^{1+\alpha}}dv
$$
and
$$
I_2(\alpha,k):= \int_0^\infty \frac{\sin (kv)\sin^2(v/2)}{v^{2+\alpha}}dv.
$$
can be calculated explicitly,  see \cite{BE69}, p.77--78, from where one can derive the asymptotic behaviour as $k\to \infty$:
\begin{equation}
I_1(\alpha,k)= \frac{\pi}{4} \frac{|k+1|^\alpha-|k-1|^\alpha}{\Gamma(1+\alpha) \sin(\pi \alpha/2)}
\sim  c_{\alpha,1} |k|^{\alpha-1},
\end{equation}
\begin{equation}
I_2(\alpha,k)=
2^{-2} \Gamma(-1-\alpha) \cos(\pi \alpha/2) \big[ 2|k|^{\alpha+1}- |k+1|^{\alpha+1}- |k-1|^{\alpha+1}\big] \sim c_{\alpha,2} |k|^{\alpha-1},
\end{equation}
where $c_{\alpha,1}:= \frac{\pi}{2 \Gamma(1+\alpha) \sin(\pi \alpha/2)}$, $c_{\alpha,2}= 2^{-1}\alpha (\alpha+1)\Gamma(-1-\alpha) \cos(\pi \alpha/2)$.  Thus, we have the exact expression for $I(\alpha,k)$, from which we derive
$$
I(\alpha,k)\sim c_{3,\alpha} k^{\alpha-1} , \quad k\to \infty.
$$
where $c_{3,\alpha}= c_{1,\alpha} +2(\alpha+1)c_{2,\alpha} $. Finally,  for $|z|\leq \delta n^{1/\alpha}$ with $\delta>0$ is small enough,
\begin{align*}
a_n  \int_\real (1-\cos(zu)) \hat{F}_n(du) =  |z|^\alpha\left(1- 2 c_\alpha  I\big(\alpha, \frac{n^{1/\alpha}}{|z|}\big)\right)
\geq c(\delta) |z|^\alpha,\quad \alpha\in (0,2),
\end{align*}
where $c(\delta)>0$ is some constant.

Let us calculate the order of convergence. From above, we have for $\alpha\in (0,2)$
\begin{align*}
\left|\RR \psi(z)-\RR \psi_n(z)\right|  &= \Big|\int_\real \frac{(1-\cos (zu))}{|u|^{1+\alpha}} \cos (n^{1/\alpha}u)du\Big|
\leq
\frac{Cz^2 }{n^{(2-\alpha)/\alpha}}.
\end{align*}
Thus, by Corollary~\ref{cor2} we arrive at
\begin{equation}
\rho(n)\leq
\begin{cases}
C n ^{-1} , & 0<\alpha<1,
\\
C n^{-\frac{2-\alpha}{\alpha}},& 1\leq \alpha <2.
\end{cases}
\end{equation}

\end{example}

\begin{example}\label{ex2}
Suppose now that $\xi_{1,n}$ possesses the distribution density 
$$
g_n(u):=\tfrac{1}{2nu\sinh(u/n)}\I_{|u|\geq 1},
$$
 and 
$a_n=b_n=n$. Conditions \textbf{A}, \textbf{C}--\textbf{E} were already checked in \cite{KKu07},  in particular, it was shown that $S_n$ converges in distribution to a hyperbolic cosine distribution $S$, i.e. the distribution  density of $S$  is $p(x)= \tfrac{1}{\pi \cosh x}$.   Since in this case
$$
a_n f_n(u)= \frac{1}{2u \sinh u} 1_{|u|\geq \frac{1}{n}} \uparrow  \frac{1}{2u \sinh u}=:f(u) \quad \text{as $n\to\infty$},
$$
the function
$$
\psi(z)= \int_\real (1-\cos (uz)) f(u)du
$$
satisfies  condition \textbf{B} with $\alpha=1$.
 Let us check \textbf{F}
for $\kappa=1$. Since for $|z|\leq 1$ we have $1-\cos z \geq (1-\cos 1) z^2$, then estimating $\tfrac{u}{\sinh u}$ from below for small $u$ by a constant we get
\begin{align*}
a_n \int_\real (1-\cos (zu)) f_n(u)du &\geq n(1-\cos 1)  \int_{ |uz|\leq 1} (zu)^2 f_n(u)du
\\&
\geq (1-\cos 1) |z|\inf_{|z|\leq \delta  n } |z| \int_{1/n}^{1/|z|}  \frac{u}{\sinh u} du
\\&
\geq
c_1 |z|\inf_{|z|\leq \delta  n } |z|   \left( \frac{1}{|z|}-\frac{1}{n} \right)
\\&
\geq c_1 (1-\delta)|z|,
\end{align*}
uniformly in $\{z:\,\,|z|\leq \delta n\}$. Thus, condition \textbf{F} holds true.

 It remains to check condition \textbf{G}. Let $|z|\leq \delta$. Since the function $r\sinh (u/r)$ is increasing in $r$, we have by dominated convergence theorem
\begin{align*}
\inf_{n\geq 1}|\theta_n(z)| &= \inf_{n\geq 1} \Big|\int_{u\geq 1} \frac{\cos (zu)}{nu\sinh (u/n)}du\Big|=\lim_{n\to \infty} |\theta_n(z)| = \int_1^\infty \frac{\cos(zu)}{u^2}du
\\&
\geq \cos 1 \int_1^{1/\delta} \frac{du}{u^2},
\end{align*}
 which gives \textbf{G}.

Finally, by Corollary~\ref{cor2} we arrive at
$$
\sup_{x\in \real } |p_n(x)-p(x)|\leq \frac{C}{n}.
$$
\end{example}

\textbf{Acknowledgement.} The author thanks A.Kulik, who initiated this work, for helpful discussions and comments.

\end{document}